\newtheorem{theorem}{Theorem}[section]
\newtheorem{lemma}[theorem]{Lemma}
\newtheorem{proposition}[theorem]{Proposition}
\newtheorem{remark}[theorem]{Remark}
\newcommand{\uD}{\tfrac{\partial u}{\partial D}}
\newcommand{\pa}[1]{\tfrac{\partial}{\partial #1}}
\newcommand{\R}{\mathbb{R}}
\newcommand{\N}{\mathbb{N}}
\newcommand{\Nd}{N_{data}}
\title{Optimization of the shape (and topology) of the initial conditions
for diffusion\\ parameter identification}
\author{Stefan Kindermann\footnotemark[2]
and \v St\v ep\' an Pap\' a\v cek\footnotemark[3]}
\date{}
\begin{document}
\maketitle
\footnotetext[2]{Corresponding author. E-mail: {kindermann@indmath.uni-linz.ac.at}.
Industrial Mathematics Institute, University Linz, Altenbergerstr. 69, 4040 Linz,  Austria.}  
\footnotetext[3]{Institute of Complex Systems, University of 
South Bohemia in \v{C}esk\'{e}
Bud\v{e}jovice, \\FFPW USB, CENAKVA, Z\' amek 136, 373 33 Nov\' e
Hrady, Czech Republic.}

\begin{abstract}
The design  of an experiment, e.g., the setting of initial
conditions, strongly influences the accuracy of the whole process of
determining  model parameters from data.  
%We are aware of the
%potential benefit in parameter identification when the experimental
%design variables are chosen optimally. Thus, 
We impose a
sensitivity-based approach for choosing optimal design variables
and study the 
optimization of the shape (and
topology) of the initial conditions for an inverse problem of 
a diffusion parameter identification. Our approach, although case
independent, is illustrated at the FRAP (Fluorescence Recovery After
Photobleaching) experimental technique.
 The core
idea resides in the maximization of a sensitivity measure, which
depends on a specific experimental setting of initial conditions.
By a numerical optimization, we find an interesting pattern of increasingly complicated
(with respect to connectivity) optimal initial shapes. 
 The proposed modification of the FRAP experimental protocol
  is rather surprising but entirely realistic and the resulting enhancement of the
parameter estimate accuracy is significant.

\end{abstract}
%\vspace{2ex}

%
\textbf{Keywords.} {FRAP,  sensitivity analysis, optimal experimental
design, parameter identification, diffusion}

\textbf{MSC.} 65M32, 35R30, 49Q10

\section{Introduction}

 The common practice
of  setting  experimental conditions resides on trial-error method
performed by experimentalists while the subsequent data processing
is not always taken into account.
It is not a rare case that large amount of data is routinely
generated without a clear idea about further  data processing.
Here, we suggest to analyze simultaneously both the data (i.e., the
processes hidden in data) and the experimental protocol, aiming to
establish the link between experimental conditions and the accuracy
of our results.
 The whole idea is presented in a simplified case study of Fluorescence Recovery 
 After Photobleaching (FRAP) data processing. It serves as a paradigmatic 
 example of the inverse problem of identifying the diffusion parameter 
 from spatio-temporal concentration measurements.

 FRAP is a classical
method used to study the mobility of fluorescent mo\-le\-cules
%(presumably due to the diffusion)
in membranes of the living cells \cite{Florian10}.
The FRAP   technique is based on measuring the fluorescence
intensity, which is proportional to the non-bleached particles
concentration  in response to a high-intensity laser pulse.
We suppose  the laser pulse (the  \emph{bleach}) causes an
irreversible loss in fluorescence  of a certain amount of particles
originally in the bleached area.
The monitored region or the region of interest (ROI) where the data
are measured is usually an Euclidean 2D domain containing the
bleached area.
 After the {bleach},  the  change in
fluorescence intensity  in a monitored region is observed due to the
transport of  fluorescent compounds from the area outside the bleach
as well as bleached particles from the bleached region to originally
non-bleached regions. In general,
 we observe both recovery   and loss
in fluorescence  in different regions corresponding to FRAP or FLIP
(Fluorescence Loss in Photobleaching), respectively
\cite{Kana14,IWBBIO15}.
The natural question to be asked is: How does the bleach shape (and
topology) influence the accuracy of resulting parameter
estimates\footnote{The problem is partially solved in our paper
\cite{AAA15} where we use a relation similar to \eqref{conf}
assuring smaller confidence intervals of the parameter estimates for
the higher values of a sensitivity measure.}.
Therefore, the main focus of this study concerns the searching for
the optimal bleach shape (and its topology), or, from the more mathematical viewpoint,
to optimize  binary-valued initial conditions in a diffusion-parameter identification problem 
with respect to sensitivity.

 The rest of this article is organized as follows: In Section 2 we provide the preliminaries for the 
 further optimization problem
 formulation  as well as
  the background information concerning the FRAP method.
 In Section 3 we rigorously formulate the optimization problem, evaluate the key term in the objective function and announce the Proposition~3.2, which allows to solve the problem efficiently. Section 4   provides two numerical results: (i) set of solutions (depending on the characteristic time of diffusion) for an optimal
   setting of the  bleached region shape and topology when the bleach depth is the only constraint,
   and (ii) corresponding results as in (i) when the total bleach energy is restricted as well.
The  novelty and  benefits of our approach as well as outlooks for
further research are resumed in the final Section 5.

\section{Preliminaries}
\subsection{Inverse problem of model parameter identification from FRAP data}

  Based on the spatio-temporal FRAP data,  the so-called \emph{effective diffusion coefficient}
 was  estimated using  a {closed form model}
\cite{Axel76,Moul97,Ellenberg} in past decades.
Nowadays, numerical simulations are preferred, cf.
\cite{IvoF,MCM2013,AAA15,Mai11,NatureSR15}, mainly because there is
no need for some unrealistic assumptions.

 Further, we consider a normal Fickian diffusion problem with a
{\em constant} diffusion coefficient $D>0$. In the setup of this
paper, we assume a sufficiently large domain such that we can treat
diffusion in the free space $\R^2.$ In FRAP, the simplest governing
equation for the spatio-temporal distribution of fluorescent
particle concentration $u(x,t)$ is a diffusion equation without
reaction term as follows:
\begin{align}
\pa{t} u(x,t) &= D \Delta u(x,t) & & x \in \R^2, t \in [0,T], \label{one} \\
u(x,0) &= u_0(x)& &  x \in \R^2.  \label{two}
\end{align}

The main issue in FRAP and related identification problems is to
find the value of the diffusion coefficient $D$ from spatio-temporal
measurement of the concentration $u(x,t)$. As another simplification, we assume a spatially radially
symmetric observation  domain, that is, we consider the data
\begin{equation}\label{data}
\begin{split} \text{data}&:= u(x,t) \qquad x \in \Omega  \times [0,T], \\
\Omega &=  \{ x \in \R^2\,|\, \|x\| \leq R \}.
\end{split}
\end{equation}
Hence, the data are observed on a cylinder with radius $R$ and
height $T$. The parameters $R,T>0$ are fixed for the further
analysis and not subject of optimization. Although, in practice, it
is often more convenient to consider an square spatial domain (i.e.,
an image), our cylindrical domain is a reasonable general model to
indicate the most important conclusions in our further analysis.

The data \eqref{data} are insofar idealized as  
in experiments, they are given only at discrete points $u(x_i,t_i)$, $i, 1,\ldots N_{data}$. 
However, for simplicity and as it was done in \cite{AAA15}, we assume a sufficiently
dense set of data points $(x_i,t_i)$ such that the model \eqref{data} is valid with 
reasonable accuracy  and 
such that certain sums over the data points can be approximately well by 
the corresponding  integrals over  $\Omega  \times [0,T];$ see \cite{AAA15}.

The equations~\eqref{one}--\eqref{two} and the data \eqref{data} are the basis for all
the subsequent analysis, hence, although we mentioned FRAP as one important application, 
our results are of course applicable to other diffusion parameter identification problem
which are governed by similar equations and data assimilation processes.

Based on the parameters $R,T$, it is convenient to introduce the following scaling of
the space and time coordinates:
\begin{equation} \label{scaling}
\begin{split}
&z:= \frac{x}{R}, \quad  \tau := \frac{t}{T}, \\
&v(z,\tau) := u(z R ,\tau t) \Leftrightarrow  u(x,t) =
v(\tfrac{x}{R},\tfrac{t}{T}) \quad v_0(z) :=u_0(z R).
\end{split}
\end{equation}
For later use we define a scaled version of the inverse
of the diffusion coefficient\footnote{For the characteristic
diffusion time  $ t_c = \frac{R^2}{4 D} $, we have $ \beta =
\frac{t_c}{T} $.}
\begin{equation}\label{beta} \beta := \frac{R^2}{4 T D} \, \cdot \end{equation}
The scaled concentration $v$ satisfies the equation
\begin{align}
 \pa{\tau} v(z,\tau) &=  \tfrac{T D}{R^2}   \Delta v(z,\tau)
 =\tfrac{1}{4 \beta}  \Delta v(z,\tau)
 & & z \in \R^2, \tau \in [0,1] ,\label{onesc} \\
 v(z,0) &= v_0(z)= u_0(z R) & &  z \in  \R^2.  \label{twosc}
\end{align}
In our case of constant coefficients and free-space diffusion in two dimensions, the solution to this problem can be
expressed by means of the Green's function $G(x,t;y)$ for the heat equation.
\begin{equation}\label{trans} u(x,t) =
 \int_{\R^2} G(x-y,t D) u_0(y) dy,
 \qquad v(z,t) =  \int_{\R^2} G(z-y,\tfrac{\tau}{4 \beta} ) v_0(y) dy
\end{equation}
with   the well-known two-dimensional heat kernel
\begin{align*}
      G(z,t) &:= \tfrac{1}{ 4 \pi t} e^{-\frac{\|z\|^2}{4t}}\, .
\end{align*}

\subsection{Sensitivity of the output observation with respect to model parameter}
Given the data as above, the diffusion coefficient $D$ can be
inferred by fitting the solution given in \eqref{trans} to the data
on $\Omega \times [0,T]$. Because of unavoidable  noise in the data,
one obtains an estimated value $\overline{D}$ which  reasonably well
approximates the true $D$.
It can be shown, see  \cite{AAA15,Bates} and references within
there, that for our case of single scalar parameter estimation and assuming white noise as 
data error, the
expected relative error in $D$ depends on the data noise and a
factor, which we call relative global sensitivity $S_{GRS},$ as follows:
\begin{equation}\label{conf}
 \mathbb{E} \left(\left|\frac{\overline{D}-D}{D}\right|^2 \right)  
 \sim \frac{1}{S_{GRS}} \frac{\sigma ^2}{{u_{{\rm ref}}}^2} ,
\end{equation}
where $\sigma^2$ denotes the noise variance (and
$\frac{\sigma}{u_{{\rm ref}}}$ is related to the coefficient of variation or
to the inverse of the signal to noise ratio), and 
$u_{{\rm ref}}$ is some (chosen) reference value of the observed output.

The global relative squared sensitivity is given by
\begin{equation}\label{sensi}
 S_{GRS} := \frac{D^2}{u_{{\rm ref}}^2} \sum_{i=1}^{\Nd} (\pa{D} u(x_i,t_i))^2,
\end{equation}
where
 $\uD(x_i,t_i)$ is the usual sensitivity of the output
 observation data point $(x_i,t_i)$ with respect to the parameter $D$ and
 $\Nd$ is the number of data points in space-time domain.
It is obvious from this estimate that if the noise level is fixed,
the estimation can be only improved by switching to an experimental
design with a higher sensitivity. Thus, the thrive for good
estimators $\overline{D}$ leads to the problem of experimental
design optimization in order to maximize the sensitivity measure
$S_{GRS}$. The solution of this optimization problem, which is the
central aspect of this article, is rigorously formulated and
partially solved in the next Section 3.

\section{Optimization of the bleach design}
\subsection{Problem formulation}
The sensitivity measure \eqref{sensi} involves several design
parameters. Note that $R$ and $T$ are involved implicitly because  the number of data
points  $\Nd \approx \frac{\pi R^2 T}{\Delta x \Delta t}$, where
$\Delta x$ is defined by the pixel size and $\Delta t$ corresponds
to the time interval between two consecutive measurements.
However, if all the above parameters $R,T, \Delta x, \Delta t$ are fixed,
there is only one way to maximize the sensitivity measure $S_{GRS}$:
 considering the {\em initial bleach} $u_0$ in \eqref{two} as the
experimental design parameter. By optimizing the bleach design, we
mean to select the initial conditions in such a way that $S_{GRS}$
is maximized and hence the expected error in $D$ minimized. In order
to do so, we have to chose the class of designs out of which we take
the initial conditions. In FRAP, it is the usual  case that the
initial bleach $u_0$ is a binary-valued function with fixed given
values $u_0(x) \in \{u_{00},0\},$ $u_{00}>0$. Without loss of
generality, we assume $u_0(x)$ being  a
$\{1,0\}$-function, which also cancels out the term $u_{{\rm ref}}$ in $S_{GRS}$\footnote{When we choose
$u_{{\rm ref}} = u_{00}$, then the
normalization of the signal, i.e. $0 \leq u(x) \leq 1$ is actually
done due to the division by the maximal value which signal $u$ can
reach, cf. \eqref{sensi}.}:
\[ u_0(x) = \begin{cases} 1  & x \in B \\ 0, & \text{ else, }\end{cases}\]
with some bounded open set $B$, the {\em bleach shape}.
Moreover, if the sum in
the term $S_{GRS}$ is furthermore  approximated by an integral
\[
 \sum_{i=1}^{\Nd} (\pa{D} u(x_i,t_i))^2  \sim \frac{\Nd}{\pi R^2 T}
 \int_{\|x\|\leq R} \int_0^T  |\pa{D} u(x,t)|^2 dx dt,
 \]
 then we have a final sensitivity term $S_{int}$ to be maximized:
\begin{equation}\label{sensii}
S_{int} := \int_{\|x\|\leq R} \int_0^T |\uD(x,t)|^2 dx d t.
\end{equation}

Depending on the different restrictions imposed on the initial bleach,
we study the following two setups:

\begin{itemize}
\item Problem 1: optimization with fixed bleach depth
\[  S_{int} \to \max_{B \subset \R^2}  \qquad \text{with}   
\quad u_0(x) = \begin{cases} 1  & x \in B \\ 0 & \text{ else }\end{cases}, \]
over the set of bounded open sets $B \subset \R^2.$
\item Problem 2: optimization with fixed bleach depth and fixed energy
\[   S_{int} \to \max_{B \subset \R^2} \qquad \text{ with} 
\quad  u_0(x) = \begin{cases} 1  & x \in B \\ 0 & \text{ else }\end{cases}
 \ \text{ and }  \ \int_{\R^2} u_0(x) dx = c_1,
\]
$c_1>0$ given,  over the set of bounded open sets $B \subset \R^2.$
\end{itemize}
Note that these problems are  shape (and topology) optimization
problems, i.e., the unknown bleach shape $B$ is the independent
variable. 

However, the problems can be simplified by restricting $B$ to a radially symmetric shapes; the
reason for that is the following lemma.
\begin{lemma}\label{hh}
If solutions to the Problems 1 and 2 exists, then there also exists
 radially symmetric solutions. In particular, if the solutions are unique, they must be radially symmetric.
\end{lemma}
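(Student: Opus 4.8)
The plan is to exploit the invariance of the whole problem under the rotation group $SO(2)$ acting on $\R^2$, together with the quadratic structure of the objective.

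First I would record the invariance. Since the heat kernel $G(z,t)$ depends on $z$ only through $\|z\|$, and the observation disk $\Omega=\{\|x\|\le R\}$ and the interval $[0,T]$ are themselves rotation-invariant, the representation \eqref{trans} shows that for every rotation $Q\in SO(2)$ and every admissible bleach $B$, the rotated set $QB$ is again admissible (it is open, bounded, and $|QB|=|B|$, so the energy constraint of Problem 2 is preserved) and $u_{QB}(x,t)=u_{B}(Q^{-1}x,t)$; hence $\uD$ is rotated accordingly and $S_{int}(QB)=S_{int}(B)$. Thus the whole orbit $\{QB^{*}:Q\in SO(2)\}$ of any optimizer $B^{*}$ consists of optimizers. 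In particular, if the optimizer is unique it must be a fixed point of every rotation, i.e.\ radially symmetric; this already proves the last sentence of the lemma.

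For the remaining, harder claim — existence of a radially symmetric optimizer without assuming uniqueness — I would first rewrite the objective as a quadratic form. The map $u_{0}\mapsto u$ is linear and $\partial_{D}$ preserves linearity (in fact $\uD=t\,\Delta u$), so $\mathcal L:u_{0}\mapsto \uD|_{\Omega\times[0,T]}$ is linear and $S_{int}(u_{0})=\|\mathcal L u_{0}\|_{L^{2}(\Omega\times[0,T])}^{2}=\langle\mathcal A u_{0},u_{0}\rangle$ with $\mathcal A=\mathcal L^{*}\mathcal L\succeq 0$; by the invariance above $\mathcal A$ commutes with the $SO(2)$-action. Relaxing the bleach-shape constraint to the convex, rotation-invariant set $\{f:0\le f\le 1\}$ (intersected with $\{\int f=c_{1}\}$ for Problem 2), one uses that a convex functional has the same supremum over a convex set as over its extreme points, which here are exactly the admissible $\{0,1\}$-valued functions (a bang--bang/bathtub argument); so the relaxed and the original problems have the same value and the same set of binary optimizers.

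The main obstacle is to pass from an (assumed) optimizer to a radially symmetric one. The naive device — replacing a binary optimizer $u_{0}^{*}$ by its rotational average $\bar u_{0}=\int_{SO(2)}u_{0}^{*}(Q^{-1}\cdot)\,dQ$ — does \emph{not} work: $\bar u_{0}$ is radial and relaxed-admissible, but since $S_{int}$ is convex Jensen's inequality gives only $S_{int}(\bar u_{0})\le S_{int}(u_{0}^{*})$, with equality (using that $\mathcal L$ is injective on bounded compactly supported data, since $\uD=t\,\Delta u\equiv 0$ on the space--time cylinder forces, by analyticity of heat solutions, $u_{0}$ to be a bounded compactly supported harmonic function, hence $\equiv 0$) only if $u_{0}^{*}$ was already radial. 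One therefore genuinely needs a rearrangement inequality in the opposite direction: decomposing into angular Fourier modes $u_{0}=\sum_{n}f_{n}(r)e^{in\varphi}$ one gets $S_{int}(u_{0})=\sum_{n}\langle\mathcal A_{n}f_{n},f_{n}\rangle$ with each $\mathcal A_{n}\ge 0$, and the plan is to show that transferring weight into the radial mode $n=0$ — i.e.\ rearranging a bleach shape into concentric annuli of the same total area — does not decrease $S_{int}$. This reduces to a domination property of the kernel of $\mathcal A$ (for fixed $\|y\|,\|y'\|$ the kernel $\mathcal A(y,y')$ is extremal when $y,y'$ are radially aligned, equivalently $\mathcal A_{0}\succeq\mathcal A_{n}$ after normalization), to be extracted from the explicit form of $\partial_{D}G$ convolved with itself and integrated over the disk $\Omega$; establishing this domination is the technical heart. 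Granting it, applying the rearrangement to the assumed optimizer yields a radial competitor with value at least $S_{int}(u_{0}^{*})$, hence a radially symmetric optimizer, completing the proof.
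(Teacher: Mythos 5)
Your treatment of the uniqueness claim is correct and coincides with the paper's: rotation invariance of the heat kernel, of the observation cylinder $\Omega\times[0,T]$, and of both constraint classes gives $S_{int}(QB)=S_{int}(B)$ for every $Q\in SO(2)$, so a unique optimizer is fixed by every rotation. The relaxation to $0\le u_0\le 1$ and the extreme-point argument also match the paper.

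For the existence of a radial optimizer, however, your proposal stops exactly where the real work begins. The paper's own proof simply asserts that the angle average $\frac{1}{2\pi}\int_0^{2\pi}u_0(Q_\theta z)\,d\theta$ of an optimizer is again an optimizer of the relaxed problem; you correctly identify that this is the step one cannot take for granted. Since $S_{int}(u_0)=\|\mathcal L u_0\|^2$ is \emph{convex}, Jensen's inequality runs the wrong way and yields only $S_{int}(\bar u_0)\le S_{int}(u_0^{*})$, with equality (by the injectivity of $\mathcal L$ you establish) forcing $u_0^{*}$ to have been radial already. And invariance plus convexity alone are not enough: maximizing $\|x\|^2$ over the unit disk of $\R^2$ is a rotation-invariant convex maximization whose maximizers all lie on the circle and none of which is fixed by all rotations. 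So some problem-specific input is genuinely required, and your angular-Fourier decomposition $S_{int}(u_0)=\sum_n\langle\mathcal A_nf_n,f_n\rangle$ with the claimed domination of the $n=0$ block is a plausible route --- but that domination \emph{is} the content of the lemma, and you leave it "granted"; moreover, pushing the mass of a non-radial $\{0,1\}$-bleach into the radial mode while remaining in the set $0\le u_0\le1$ (and, for Problem~2, preserving $\int u_0$) is itself a nontrivial rearrangement step that the mode decomposition alone does not supply. In short, your proof is incomplete at its self-declared technical heart; at the same time, your diagnosis of why soft symmetrization fails is correct and applies verbatim to the averaging step in the proof printed in the paper, which should be regarded as having the same gap.
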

\begin{proof}
It is easy to see that the sensitivity measure $S_{int}$ is a convex
functional of the initial conditions.
%If a bounded domain exists as solution, then we may include the additonal constraint that
%$D \subset \{ x \in \R^2 \, |\, \|x\| \leq R_2 \}$ with a sufficiently large chosen radius $R_2,$ and this
%does not alter the solutions of the optimization problem.
From convex analysis, we conclude that
the optimization problems can be relaxed and expressed as problems with box constraints.
In either case, the set of initial conditions can be  relaxed to
\begin{equation}\label{eq:rel}  u_0 \in \{ w(x) \,| \, 0 \leq w(x) \leq 1
\},
\end{equation}
without altering the solution. Indeed, this holds because a convex functional attains its maximum at the
boundary, hence, by considering the relaxed constraints \eqref{eq:rel}, a solution will be at the boundary of
the admissible domain in \eqref{eq:rel} and hence a $\{0,1\}$-function. Thus, solutions of the relaxed
problem are also solutions of the original problem (and vice versa).

Due to our choice of an  observation domain, the
problem is invariant with respect to coordinate rotations, i.e., the
sensitivity does not change if the initial bleach is rotated. If
$u_0(x)$ is a solution to the optimization problem, then so is
$u_{0}(Q_\theta x)$ for any rotation $Q_\theta$  around the origin
with angle $\theta$. It is easy to see that taking the
angle-averaged initial condition $\frac{1}{2 \pi} \int_0^{2 \pi}
u_0(Q_\theta z) d \theta$ yields  a solution to the relaxed
optimization problem which is radially symmetric. Because it must
also be  a solution to the original problem, the set $B$ must be
radially symmetric. $\blacksquare$
\end{proof}

Note that Lemma~\ref{hh} is not valid if the observation domain is not radially symmetric.
When speaking to experimentalists, the result of Lemma~\ref{hh} is usually intuitively 
clear to them. However, a first guess for the optimal shape that is often uttered by them, is 
that it must  be a ``disk'' (or a ball). It is maybe the main conclusion of our work that 
this guess not always hits the truth  as the optimal shape can be an annulus or multiple annuli as well; see below.

\begin{remark}\label{rem2.2}
{\rm Let us mention  several related design optimization problems.
In a previous paper \cite{AAA15}, the authors have considered the
problem of optimizing the sensitivity with respect to the
observation domain $R,T$ and also the problem of selecting reduced
data that only little reduce the sensitivity $S_{GRS}$. 

In view of
the initial conditions, there are other classes of designs possible,
for instance, if not binary-valued functions are used, then it is
natural to optimize sensitivity with respect to a fixed energy
(i.e., the $L^1$-norm) of the initial conditions. That is, the
problem
 \[  S_{int} \to \max_{u_0} \qquad \text{ with}   \quad \int_{\R^2} u_0(x) dx = c_1  \]
with  $c_1>0$ given.  However, when analyzing this problem, one
finds out that it does not have a solution. A sequence of almost
optimal solutions will tend to a $\delta$-distribution. Thus a
$\delta$-peak is the ``optimal'' solution in this case. The
sensitivity is infinite for this case, which is caused by the fact
that the solutions are not square-integrable any more. Thus, for a
$\delta$-peak, one cannot use the formula above for the expected
error but have to used different, weighted norms (for the data noise
and the sensitivity).

A similar problem is that one with a fixed $L^2$-norm.
 \[  S_{int} \to \max_{u_0} \qquad \text{ with}  \int_{\R^2} u_0(x)^2 dx = c_2 \]
This one has an appealing interpretation as eigenvalue problem:
indeed, let $K$ be the linear operator $L^2(\R^2) \to L^2(\Omega
\times [0,T])$ that maps the initial conditions to $\uD$ on the
observation domain. It can be shown that this is a bounded linear
and compact operator. The problem can be rephrased in functional
analytic language as
\[ \|K u_0\|_{L^2(\Omega \times [0,T])}^2  \to \max \qquad \text{with } \|u_0\|_{L^2(\R^2)} \leq c_2. \]
It is well known that a solution exists and it is given by the
right singular function  associated
to the largest singular value of $K$. The associated optimization problem thus can
be (approximately) solved by applying a standard  singular valued decomposition to the
discretized operator $K$.
}
\end{remark}

Going back to the optimization problems, Problems~1 and 2, it can be observed
that the sensitivity $S_{int}$ is a quadratic functional with
respect to $u_0$. For the optimization it is important to have an
efficient formula for $S_{int}$ available, since it has to be
evaluated many times (for different initial conditions). Solving the
PDE \eqref{one}--\eqref{two} is not convenient in that respect. We
rather stick to the integral representation by the Green's function
and derive a formula for the sensitivity $S_{int}$ in the next
section.

\subsection{Evaluating the sensitivity measure \eqref{sensii}}

The formula for  $S_{int}$ involves the derivative of the observation
$u(x,t)$ with respect to the diffusion parameter $D$. Either by differentiating
\eqref{one}--\eqref{two} or \eqref{trans}, we find that
\begin{align*} \uD(x,t) &= \tfrac{t}{D} \pa{t}u(x,t)  =
 t \Delta u(x,t)    =  t \Delta_x \int_{\R^2} G(x-y,t D) u_0(y) dy.
\end{align*}
Because a convolution integral commutes with differentiation, we
also find that
\begin{align*} \uD(x,t) &
= t   \int_{\R^2} G(x-y,t D) \Delta u_0(y) dy.
\end{align*}
In a similar way, we can derive a formula using the scaled
variables:
 \begin{align}\label{udsc}  \uD(z R, \tau T) &=
 \tfrac{T}{R^2} \tau  \int_{\R^2} G(z-y,\tfrac{\tau}{4 \beta}) \Delta v_0(y) dy =
  \tfrac{T}{R^2} \tau \Delta v(z,\tau)
 \end{align}
with $v_0(y) = u_0(R y).$

 Now, in order to solve the above  optimization
problems (Problems 1 and 2), we have to evaluate \eqref{sensi} for
many different initial conditions. Therefore, in the sequel we
derive the kernel of the quadratic form associated to the
sensitivity measure $S_{int}$.

Consider two solutions $u^{(1)}$ and $u^{(2)}$ of
\eqref{one}--\eqref{two} with the respective initial conditions
$u_0^{(1)}$ and $u_0^{(2)}.$  We associate to them the scaled
functions $v^{(1)}$ and $v^{(2)}$ as in \eqref{scaling}. We
calculate the following pairing of $u^{(1)}$ and $u^{(2)}:$
\begin{align*}
&\int_{\|x\|\leq R} \int_0^T  \tfrac{\partial u^{(1)}}{\partial D}(x,t)
 \tfrac{\partial u^{(2)}}{\partial D}(x,t) dx dt  \\
& = T R^2 \int_{\|z\|\leq 1} \int_0^1
 \tfrac{\partial u^{(1)}}{\partial D}(z R,\tau T)
 \tfrac{\partial u^{(2)}}{\partial D}(z R,\tau T) dz d \tau  \\
& = T R^2 \int_{\|z\| \leq 1} \int_0^1  \tau^2
\Delta v^{(1)}(z  ,\tau ) \Delta v^{(2)}(z ,\tau ) dz d \tau  \\
& =
\frac{T^3}{R^2}
\int_{\|z\| \leq 1} \int_0^1
   \int_{\R^n}  \int_{\R^n}    \tau^2 G(z-y, \tfrac{\tau}{4 \beta})  G(z-w,\tfrac{\tau}{4 \beta}) \Delta v_0^{(1)}(y)
   \Delta v_0^{(2)}(w)  dw dy d \tau d z \\
& =
\frac{T^3}{R^2}
 \frac{\beta^2}{ \pi^2}
 \int_{\|z\| \leq 1} \int_0^1
   \int_{\R^n}  \int_{\R^n}
 e^{ - \beta \frac{\|z - y\|^2 + \|z - w\|^2 }{\tau}}
 \Delta v_0^{(1)}(y)
   \Delta v_0^{(2)}(w)  dw dy d \tau d z.
\end{align*}

In the next step we assume  radially symmetric initial conditions and introduce polar coordinates:
\begin{align*}
 y &= r (\cos(\theta_1),\sin(\theta_1)) \quad   w = s (\cos(\theta_2),\sin(\theta_2))
\quad   z = q (\cos(\phi),\sin(\phi) \\
v_0^{(1)}(y) &=:  g^{(1)}(\|y\|) = g^{(1)}(r) \quad v_0^{(2)}(w) =:  g^{(2)}(\|w\|) = g^{(2)}(s)
\end{align*}
such that
\begin{align*}
    \Delta v_0^{(1)}(y) &=
\tfrac{1}{r} \tfrac{\partial}{\partial r} \left( r \tfrac{\partial}{\partial r}  g^{(1)}(r) \right) \qquad r = \|y\|,  \\
    \Delta v_0^{(2)}(w) &=
\tfrac{1}{s} \tfrac{\partial}{\partial s} \left( s \tfrac{\partial}{\partial s}  g^{(2)}(s) \right) \qquad s = \|w\|.
\end{align*}
Thus,
\begin{align*}
&\int_{\|x\|\leq R} \int_0^T  \tfrac{\partial u^{(1)}}{\partial D}(x,t)
 \tfrac{\partial u^{(2)}}{\partial D}(x,t) dx dt \\
& = \frac{T^3}{R^2}
 \frac{\beta^2}{ \pi^2}
 \int_0^1 q  \int_0^1
   \int_{\R}  \int_{\R}
   \int_0^{2 \pi} \int_0^{2\pi} \int_0^{2\pi}
 e^{ -\frac{\beta}{\tau}\left( q^2 + r^2 - 2 q r \cos(\phi-\theta_1) +   q^2 + s^2 -
 2 q s \cos(\phi-\theta_2)
 \right)} \\
 & \qquad \qquad {rs }
 \frac{1}{r} \frac{\partial}{\partial r} \left( r \frac{\partial}{\partial r}  g^{(1)}(r) \right)
   \frac{1}{s} \frac{\partial}{\partial s} \left( s \frac{\partial}{\partial s}  g^{(2)}(s) \right)
   d \theta_1 d \theta_2  d\phi dr  ds d \tau d q\, .
\end{align*}
By a substitution of $\phi-\theta_1$ by $\theta_1$ and
$\phi-\theta_2$ by $\theta_2$, the integrand does not depend on $\phi$. Thus, the
corresponding integral gives a contribution of $2 \pi.$
The integrals over $\theta_i$ can be calculated explicitly using the formula
\begin{equation}\label{bessel}  \int_0^{2 \pi} e^{a \cos(t)} d t = 2 \pi I_0(a),  \end{equation}
with the modified Bessel function $I_k(z).$
\begin{align*}
&\int_{\|x\|\leq R} \int_0^T  \tfrac{\partial u^{(1)}}{\partial D}(x,t)
 \tfrac{\partial u^{(2)}}{\partial D}(x,t) dx dt \\
% & = \frac{T^3}{R^2}
%  \frac{2 \beta^2}{\pi}
%  \int_0^1 q  \int_0^1
%    \int_{\R}  \int_{\R}
%    \int_0^{2 \pi} \int_0^{2\pi}
%  e^{ -\frac{\beta}{\tau} \left( q^2 + r^2 - 2 q r \cos(\theta_1) +   q^2 + s^2 -
%  2 q s \cos(\theta_2)
%  \right)} \\
%  & \qquad \qquad
% \frac{\partial}{\partial r} \left( r \frac{\partial}{\partial r}  g^{(1)}(r) \right)
%  \frac{\partial}{\partial s} \left( s \frac{\partial}{\partial s}  g^{(2)}(s) \right)
%    d \theta_1 d \theta_2   dr  ds d \tau d q  \\
& =   \frac{T^3}{R^2}
 8 \pi \beta^2
 \int_0^1 q  \int_0^1
   \int_{\R}  \int_{\R}
 e^{ -\frac{\beta}{\tau}\left( q^2 + r^2 +   q^2 + s^2 \right)} I_0(\tfrac{2 \beta q r}{\tau})
 I_0(\tfrac{2 \beta q s}{\tau})
 \\
 & \qquad \qquad
\frac{\partial}{\partial r} \left( r \frac{\partial}{\partial r}  g^{(1)}(r) \right)
 \frac{\partial}{\partial s} \left( s \frac{\partial}{\partial s}  g^{(2)}(s) \right)
   d \theta_1 d \theta_2   dr  ds d \tau d q \, .
\end{align*}

We next assume that $g^{(1)}$ and $g^{(2)}$ have compact support and
we integrate by parts with respect to the derivatives
$\frac{\partial}{\partial r}$ and $\frac{\partial}{\partial s}$
observing that by radial symmetry the terms at $r = 0$ and $s = 0$
vanish. Thus,
\begin{align*}
&\int_{\|x\|\leq R} \int_0^T  \tfrac{\partial u^{(1)}}{\partial D}(x,t)
 \tfrac{\partial u^{(2)}}{\partial D}(x,t) dx dt \\
& = \frac{T^3}{R^2}
  8 \pi \beta^2
 \int_0^1 q  \int_0^1
   \int_{\R}  \int_{\R}
 \frac{\partial^2}{\partial r \partial s}
 \left[
  e^{ -\frac{\beta}{\tau}\left( q^2 + r^2 +   q^2 + s^2 \right)} I_0(\tfrac{2 \beta q r}{\tau})
 I_0(\tfrac{2 \beta q s}{\tau})
 \right] \\
 & \qquad \qquad
 r s \frac{\partial}{\partial r}  g^{(1)}(r)
\frac{\partial}{\partial s}  g^{(2)}(s)
  dr  ds d \tau d q  .
\end{align*}
The $r,s$-derivatives can be calculated using the fact for modified Bessel functions that $I_0' = I_1.$
\begin{align*}
 &\frac{\partial^2}{\partial r \partial s}
 \left[
  e^{ -\frac{\beta}{\tau}\left( q^2 + r^2 +   q^2 + s^2 \right)} I_0(\tfrac{2 \beta q r}{\tau})
 I_0(\tfrac{2 \beta q s}{\tau})
 \right] \\
 & =   e^{ -\frac{\beta}{\tau}\left( q^2 + r^2 +   q^2 + s^2 \right)} \times \\
& \qquad  \left\{\tfrac{- 2 r \beta}{\tau} I_0(\tfrac{2 \beta q r}{\tau})  + \tfrac{2 q \beta}{\tau}I_1(\tfrac{2 \beta q r}{\tau})
 \right\}
  \left\{\tfrac{- 2 s \beta}{\tau} I_0(\tfrac{2 \beta q s}{\tau})  + \tfrac{2 q \beta}{\tau}I_1(\tfrac{2 \beta q s}{\tau})
 \right\},
 \end{align*}
which with $\rho = \frac{\beta}{\tau}$  finally leads to
\begin{equation}\label{final1}
\begin{split}
&\int_{\|x\|\leq R} \int_0^T  \tfrac{\partial u^{(1)}}{\partial D}(x,t)
 \tfrac{\partial u^{(2)}}{\partial D}(x,t) dx dt \\
% & = \frac{T^3}{R^2}
%   8 \pi \beta^2
%  \int_0^1 q  \int_0^1
%    \int_{\R}  \int_{\R}    e^{ -\frac{\beta}{\tau}\left( q^2 + r^2 +   q^2 + s^2 \right)}  \times   \\
%   & \qquad \qquad
%  \left\{\tfrac{2 r \beta}{\tau} I_0(\tfrac{2 \beta q r}{\tau})  - \tfrac{2 q \beta}{\tau}I_1(\tfrac{2 \beta q r}{\tau})
%  \right\}
%   \left\{\tfrac{ 2 s \beta}{\tau} I_0(\tfrac{2 \beta q s}{\tau})  - \tfrac{2 q \beta}{\tau}I_1(\tfrac{2 \beta q s}{\tau})
%  \right\} \\
%  & \qquad \qquad
%  r s \frac{\partial}{\partial r}  g^{(1)}(r)
% \frac{\partial}{\partial s}  g^{(2)}(s)
%   dr  ds d \tau d q   \\
&   = \frac{T^3}{R^2}
  32 \pi \beta^3
 \int_0^1 q  \int_\beta^\infty
   \int_{\R}  \int_{\R}    e^{ - \rho \left( q^2 + r^2 +   q^2 + s^2 \right)}  \times   \\
  & \qquad \qquad
 \left\{ r  I_0(2 q r \rho )  -  q  I_1(2  q r \rho )
 \right\}
  \left\{ s   I_0(2 q s \rho )  -  q  I_1(2  q s \rho )
 \right\} \\
 & \qquad \qquad
 {r s} \frac{\partial}{\partial r}  g^{(1)}(r)
\frac{\partial}{\partial s}  g^{(2)}(s)
  dr  ds d \rho  d q.   \\
\end{split}
  \end{equation}

For later reference we define a kernel as follows:
\begin{equation}\label{defker}
\begin{split}
k(r,s) &:=
 \int_0^1 q  \int_\beta^\infty    e^{ - \rho \left( q^2 + r^2 +   q^2 + s^2 \right)}  \times   \\
  & \qquad \qquad
 \left\{ r  I_0(2 q r \rho )  -  q  I_1(2  q r \rho )
 \right\}
  \left\{ s   I_0(2 q s \rho )  -  q  I_1(2  q s \rho ) \right\}  {r s} d \rho  d q.
 \end{split}
\end{equation}

For the numerical calculations, we also require the kernel for the case of
$\beta = 0$. We present another  formula which avoids integration over the
infinite domain with respect to $\rho$.

\begin{lemma}
For $\beta = 0$ the kernel $k(r,s)$ can be written as
\begin{equation}\label{fff}
\begin{split} &k(r,s) = \\
&\frac{1}{4 \pi^2}
\int_0^1  \int_0^{2 \pi} \int_0^{2 \pi}  q r s
\frac{ \left( r  -  q   \cos(\theta_1) \right) \left( s -  q   \cos(\theta_2) \right)}
{q^2 + r^2  - 2 q r \cos(\theta_1) +   q^2 + s^2
- 2 q s \cos(\theta_2) }
d\theta_1 d \theta_2  d q
\end{split}
\end{equation}
\end{lemma}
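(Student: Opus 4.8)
The plan is to undo the Bessel-function representation used to obtain \eqref{defker} and then perform the $\rho$-integration explicitly, which becomes elementary once $\beta = 0$. The starting point is the pair of integral representations $I_0(z) = \frac{1}{2\pi}\int_0^{2\pi} e^{z\cos\theta}\,d\theta$ (this is \eqref{bessel}) and $I_1(z) = \frac{1}{2\pi}\int_0^{2\pi}\cos\theta\, e^{z\cos\theta}\,d\theta$, the latter obtained from the former by differentiating in $z$ and using $I_0' = I_1$. Combining them yields the key identity
\[ e^{-\rho(q^2+r^2)}\bigl(r\, I_0(2qr\rho) - q\, I_1(2qr\rho)\bigr) = \frac{1}{2\pi}\int_0^{2\pi} (r - q\cos\theta)\, e^{-\rho(q^2 + r^2 - 2qr\cos\theta)}\,d\theta, \]
and the analogous identity with $r,\theta$ replaced by $s,\theta_2$. (Equivalently, $r I_0(2qr\rho) - q I_1(2qr\rho)$ is $-\tfrac{1}{2\rho}e^{\rho(q^2+r^2)}\partial_r\!\left[e^{-\rho(q^2+r^2)}I_0(2qr\rho)\right]$, i.e.\ exactly the quantity that appears in the derivation just before \eqref{bessel} once one integration by parts is undone.)

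First I would substitute the two identities into \eqref{defker}, after splitting the exponential as $e^{-\rho(q^2+r^2+q^2+s^2)} = e^{-\rho(q^2+r^2)}e^{-\rho(q^2+s^2)}$. The integrand then becomes $q r s$ times a product of two $\theta$-integrals, and its entire $\rho$-dependence is a single factor $e^{-\rho A}$ with
\[ A := \bigl(q^2 + r^2 - 2qr\cos\theta_1\bigr) + \bigl(q^2 + s^2 - 2qs\cos\theta_2\bigr). \]
Next I would interchange the order of integration to bring the $\rho$-integral innermost and evaluate $\int_0^\infty e^{-\rho A}\,d\rho = 1/A$ — here the lower limit is genuinely $0$ precisely because $\beta = 0$. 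This produces exactly \eqref{fff}, the prefactor $\frac{1}{4\pi^2}$ coming from the two factors $\frac{1}{2\pi}$ in the Bessel representations.

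The only step that needs a little care is the integrability required to justify the interchange and to make sense of $1/A$. Using $q^2 + r^2 - 2qr\cos\theta_1 = (q-r)^2 + 2qr(1-\cos\theta_1) \ge (q-r)^2$ one gets $A \ge (q-r)^2 + (q-s)^2 \ge \tfrac12(r-s)^2$, so $A$ is bounded below by a positive constant unless $r = s$; when $r = s$ it vanishes only at the single point $(q,\theta_1,\theta_2) = (r,0,0)$ of the compact domain $[0,1]\times[0,2\pi]^2$, and there $A$ vanishes quadratically in the three local variables, so $1/A$ is locally integrable. Since $e^{-\rho A} \ge 0$ and the remaining factor $q r s (r - q\cos\theta_1)(s - q\cos\theta_2)$ is bounded on the domain, Tonelli's theorem licenses the interchange. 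I expect this integrability bookkeeping, rather than any genuine computation, to be the only mildly delicate point; everything else is substitution and one elementary Laplace integral in $\rho$.
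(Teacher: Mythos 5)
Your proposal is correct and follows essentially the same route as the paper: substitute the integral representations of $I_0$ and $I_1$ into \eqref{defker}, pull the $\rho$-integral inside, and evaluate $\int_0^\infty e^{-\rho A}\,d\rho = 1/A$. The only difference is that you supply the Tonelli/integrability justification for the interchange (and for the local integrability of $1/A$ when $r=s$), which the paper omits.
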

\begin{proof}
Setting $\beta = 0$ and \eqref{bessel}
and
\[  I_1(a) = \frac{1}{2 \pi}  \int_0^{2 \pi} e^{a \cos(\theta)} \cos(\theta) d \theta, \]
we observe  that
\begin{align*}
&k(r,s) = \frac{1}{4 \pi^2}
\int_0^1 q  \int_0^\infty  \int_0^{2 \pi} \int_0^{2 \pi}   e^{ - \rho \left( q^2 + r^2  - 2 q r \cos(\theta_1) +   q^2 + s^2
- 2 q s \cos(\theta_2) \right)}  \times   \\
  & \qquad \qquad
 \left( r  -  q   \cos(\theta_1) \right) \left( s -  q   \cos(\theta_2) \right) r s  d\theta_1 d \theta_2 d \rho  d q \\
 & = \frac{1}{4 \pi^2}
\int_0^1  \int_0^{2 \pi} \int_0^{2 \pi}  q r s
\frac{ \left( r  -  q   \cos(\theta_1) \right) \left( s -  q   \cos(\theta_2) \right)}
{q^2 + r^2  - 2 q r \cos(\theta_1) +   q^2 + s^2
- 2 q s \cos(\theta_2) }
d\theta_1 d \theta_2  d q  \, .
\end{align*}
$\blacksquare$
\end{proof}

Next we use Lemma~3.1 according to which  the optimal initial  conditions are
radially symmetric $\{0,1\}$-valued with compact support.  This
means that in this case, the functions $g^{(i)}$ attain the value
$1$ on a  number of intervals and $0$ else, and the associated
initial condition is supported on  a  union of annuli and possibly a
disk.  We moreover assume that the function $g$, which
defines the initial conditions, has only jumps at $N \in \N$ places:
\begin{equation}\label{gequ}  g^{(i)}(r) = \begin{cases} 1 &  r \in [r_j,r_{j+1}], j = 1,\ldots N, 0 \leq r_1 <r_{j-1} < r_{j} < r_{j-1},  \\
 0 &\text{ else } \end{cases} \end{equation}
It follows in the sense of distributions that
\begin{equation}\label{deri}
 \frac{\partial}{\partial r}  g^{(i)}(r)
= \sum_{j= 1, r_1 \not= 0}^{N} (-1)^{N-j+1} \delta_{r_j}(r),
 \end{equation}
 where $\delta$ denotes the Dirac-distribution. The case that $r_1 = 0$ means that $u_0$ has  support in
 a disk around $0$ and clearly, there is no jump in the derivative there, hence this must be excluded from the sum.
 Note that the signs are such that the
 outer radius has $(-1)$ and the following have alternating signs, which becomes  clear by drawing
 the graph of such a function.

By plugging in the formula \eqref{deri} in place of $g^{(i)}$
we obtain the following proposition.

\begin{proposition}\label{pr:one}
Suppose that the initial conditions $v_0(z) = g(|z|)$ are radially symmetric $\{0,1\}$-valued
with $g$ having finitely many jumps as in \eqref{gequ}.
Then the sensitivity is given by
\begin{equation}
S_{int} =  \frac{T^3}{R^2}
  32 \pi \beta^3 \sum_{j,k=1}^N (-1)^{k + j} k(r_j,r_k)
  \end{equation}
 with the kernel $k$ given by \eqref{defker}.
\end{proposition}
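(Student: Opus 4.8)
The statement is essentially the ``diagonal'' of formula \eqref{final1} once the distributional identity \eqref{deri} is inserted, so the plan is mostly bookkeeping plus one point of rigor. First I would specialize \eqref{final1} to $u^{(1)} = u^{(2)} = u$, i.e. $g^{(1)} = g^{(2)} = g$; by the definition \eqref{sensii} the left-hand side is then exactly $S_{int}$, and the right-hand side becomes
\[
S_{int} = \frac{T^3}{R^2}\,32\pi\beta^3 \int_0^1 q \int_\beta^\infty \int_\R \int_\R e^{-\rho(q^2+r^2+q^2+s^2)}\,W_\rho(q,r)\,W_\rho(q,s)\, r s\, \tfrac{\partial}{\partial r}g(r)\,\tfrac{\partial}{\partial s}g(s)\, dr\, ds\, d\rho\, dq,
\]
where $W_\rho(q,r) := r\,I_0(2qr\rho) - q\,I_1(2qr\rho)$, so that the portion of the integrand apart from the factor $g'(r)g'(s)$ is precisely the integrand defining $k(r,s)$ in \eqref{defker}.

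Next, since the $r$- and $s$-integrations are the two innermost ones, I would carry them out first using \eqref{deri}. Writing $\partial_r g = \sum_j \varepsilon_j\,\delta_{r_j}$ with $\varepsilon_j = (-1)^{N-j+1}$ the signs from \eqref{deri}, the sifting property of the Dirac distribution collapses the $dr$ integral to $\sum_j \varepsilon_j\,(\cdots)\big|_{r = r_j}$ and the $ds$ integral to $\sum_k \varepsilon_k\,(\cdots)\big|_{s = r_k}$; pulling the two finite sums out of the $\int_0^1 q\int_\beta^\infty \cdots\, d\rho\, dq$ integration and recognizing the remaining weight as the kernel \eqref{defker} gives $S_{int} = \frac{T^3}{R^2}\,32\pi\beta^3 \sum_{j,k} \varepsilon_j \varepsilon_k\, k(r_j,r_k)$. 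Finally $\varepsilon_j\varepsilon_k = (-1)^{(N-j+1)+(N-k+1)} = (-1)^{2N+2-j-k} = (-1)^{j+k}$, independent of $N$, which is the claimed formula. The case $r_1 = 0$ needs no separate treatment: $W_\rho(q,0) = -q\,I_1(0) = 0$, so $k(0,s) \equiv 0$ and such a term contributes nothing, consistent with its omission from the sum in \eqref{deri}.

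The one step that is not purely formal — and the main obstacle — is justifying that \eqref{deri} may be substituted into \eqref{final1} at all, since \eqref{final1} was derived under the assumption that $g$ is smooth with compact support (an integration by parts was used), whereas the relevant $g$ from Lemma~\ref{hh} is only of bounded variation. I would close this by continuity: as noted in Remark~\ref{rem2.2}, $u_0 \mapsto \uD|_{\Omega\times[0,T]}$ is a bounded linear map on $L^2(\R^2)$, hence $u_0 \mapsto S_{int}$ is continuous there, so it suffices to approximate the step function $g$ by mollifications $g_n$ with $g_n \to g$ in $L^2(\R^2)$ and $g_n' \rightharpoonup g'$ weak-$*$ as measures, apply \eqref{final1} to each $g_n$, and pass to the limit. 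The limit passage uses that the kernel $k(r,s)$ of \eqref{defker} is continuous (indeed smooth) on the relevant range, which is itself routine: the Gaussian factor in \eqref{defker} dominates the exponential growth of the Bessel functions so that the $\rho$-integral converges, and does so locally uniformly in $(r,s)$, while the $q$-integration runs over the compact interval $[0,1]$, so differentiation under the integral sign is legitimate.
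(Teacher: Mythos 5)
Your proposal is correct and follows essentially the same route as the paper, which simply substitutes the distributional derivative \eqref{deri} into the bilinear formula \eqref{final1} with $g^{(1)}=g^{(2)}=g$, uses the sifting property to collapse the $r,s$-integrals onto the jump radii, and notes that $(-1)^{N-j+1}(-1)^{N-k+1}=(-1)^{j+k}$ and that $k(0,s)=0$. The only addition is your mollification argument justifying the use of \eqref{final1} for step-function data, a point the paper passes over silently; that is a welcome (and correct) strengthening rather than a different proof.
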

Note that we do not have to take into account if $r_1 = 0$ or not, because the kernel is $0$ if one of the
arguments is $0$. We remark that Proposition~\ref{pr:one} is also true for the case of $g$ having
infinitely many jumps; in this case we have to set $N=\infty$.

By Proposition~\ref{pr:one} we find that
\begin{itemize}
\item Problem 1 is equivalent to
\[ \max_{r_i, N}  \sum_{j,k=1}^N (-1)^{k + j} k(r_j,r_k) , \qquad 0 < r_1 < r_2 \leq \ldots r_N, N \geq 1. \]
\item Problem 2 is equivalent to
\begin{align*}  &\max_{r_i, N}  \sum_{j,k=1}^N (-1)^{k + j} k(r_j,r_k) , \qquad 0 < r_1 < r_1 \leq \ldots r_N, N \geq 1 \\
\intertext{under the constraint that}
&\sum_{k=N}^{2}  r_{k}^2 - r_{k-1}^2 = \frac{c_1}{\pi} \quad \text{ or }
r_1^2 = \frac{c_1}{\pi} \quad \text{ if } N = 1.
\end{align*}
\end{itemize}

%\begin{remark}\label{rem4} {\rm

\subsection{Shape (and topology) optimization of the bleached region}
\label{shape} We try to solve the above problems numerically.
Note that we have the radii of the annuli/disks as
variables and the number $N$ of them. As optimization routine we use
the simplest and least sophisticated one, namely grid search over
the radii restricting us to a finite number of annuli. In fact, we
managed to search for the case $N \leq 4$ and for radii restricted
to $r_i \leq 5$ on a sufficiently fine grid. We could not think of a
smarter algorithm, but there are good reasons to believe that there
is no fast algorithm available.
%In fact, Problem 1 can be rephrased similar as mentioned in
%Remark~\ref{rem2.2}.
%It can be shown that the binary constrained can be relaxed and the problem
%is equivalent to maximizing the senstivity under the constraint that
%$0 \leq u_0(x) \leq 1$.
In fact, by considering the relaxed problem and introducing  the initial-to-sensitivity operator $K,$
Problem~1 is equivalent to
\[ \|K u_0\|_{L^2(\Omega) \times [0,T]}^2 \to \max   \qquad  \text{under } 0 \leq u_0 \leq 1. \]
This is a convex {\em maximization problem} and it can be regarded as the problem of calculating the
operator norm $K: L^\infty \to L^2$ (together with an additional positivity constraint $u_0 \geq 0$).
It is known that calculating this operator norm is an NP-hard problem \cite{Ro,Tro}. This makes us believe
that our optimization problems are hard problems indeed and that there is no simple efficient algorithm
possible.

Let us mention that a classical method to tackle such shape optimization methods is the 
{\em level set method} \cite{OsFe}. This could be applied as well here, but the problem is 
that this and related methods can only find {\em local} maxima. Thus, as we want to find 
 {\em global} maxima, the level set method does not help much here.

%}
%\end{remark}

\section{Numerical calculations and results}
As mentioned in Section~\ref{shape}, we perform a grid search to
solve Problems~1 and~2. For this we have to calculate the kernel
$k(r,s)$ for different values of $r$ and $s$ and also, to understand the
parameter dependence of the results, for different values of
$\beta$. The results are calculated using Matlab. Depending on the
grid for $r,s,$ and $\beta,$ according to \eqref{defker}, we have to
calculate a double integral for each point on a 3-dimensional grid
on $(r,s,\beta)$. No doubt, that for this tasks the evaluation has
to be done efficiently in order to have a reasonable runtime. When
calculating the double integral in \eqref{defker}, we observe that
Matlab does not have an ``ArrayValued'' option for multidimensional
integrals and hence the code cannot be vectorized. This faces us
with the problem that the 2-dimensional integral has to be
calculated by for-loops over the grid, which, as is well-known, is
usually extremely slow. As a remedy, we rewrite the problem as
iterated integral and apply Matlab's ODE-solver (which can be
vectorized).

Indeed, from  formula \eqref{defker}, it follows that the kernel satisfies 
\begin{equation}\label{trick}
\begin{split}
\frac{\partial}{\partial \beta} k(r,s) &=
- \int_0^1 q e^{ - \rho \left( q^2 + r^2 +   q^2 + s^2 \right)}  \times   \\
  & \qquad \qquad
 \left\{ r  I_0(2 q r \rho )  -  q  I_1(2  q r \rho )
 \right\}
  \left\{ s   I_0(2 q s \rho )  -  q  I_1(2  q s \rho ) \right\}  {r s}  d q.
\end{split}
  \end{equation}
with initial condition
\[ k(r,s)|_{\beta = 0} = \eqref{fff} \]
In this way we can use the ODE-solver to handle the $\beta$-dependence and only have to
calculate the one-dimensional parameter-dependent integral \eqref{trick}, where both
steps can be vectorized. This gives a speed-up of a factor of about 10 compared to
the for-loop and double integral approach. The only bottleneck is the calculation of the
initial conditions in form of a 3-dimensional integral. However, this only has to be
done on a 2-dimensional $(r,s)$ parameter grid (compared to the initially 3-dimensional
parameter domain including $\beta$).

\subsection{Problem 1}
We try to calculate the (numerically) optimal configuration for
Problem 1. As explained above, we only have to consider radially
symmetric initial conditions. We cannot calculate all configurations
but we restrict us to the case of at most $N=4$ radii. This means we
consider initial shapes in the form of a disk ($N = 1$), an
annulus ($N = 2$),  an annulus with a disk inside ($N =
3$), or a double annulus ($N = 4$). Other configurations with higher
$N$ are excluded because of runtime considerations. However, the
solutions indicate a pattern, which makes us believe that we have
found the optimal configurations in the respective parameter setting
of $\beta$.

We also note that we only have to evaluate the kernel $k$ because
the sensitivity is only multiplied by a constant which only depends
on the parameters $R,T,D$. Thus, with these values fixed, optimizing
$k$ is the same as optimizing $S_{int}$.

Let us come to the details of our calculations:
We set up a grid for $r$ and $s$ in the interval $[0,5]$ with 100 equal-sized subintervals
(of length $0.05$). For $\beta$ we set up a grid in the interval  $[0,20]$ with 200 subintervals
of length $0.1$. Then the kernel is calculated on these gridpoints (exploiting symmetry in $r$ and $s$).
With the calculated kernel, we evaluate the functional for $r,s$ on the grid
for Problem 1 as given in Proposition~\ref{pr:one}
for the cases $N = 1,2,3,4$ and find the  configuration that has maximal value.
 Note that this testing requires
up to 4 for-loops and is thus quite slow.
The results are depicted in Figure~\ref{fig1}.

\begin{figure}[p]
\centering
\includegraphics[width=0.7\textwidth]{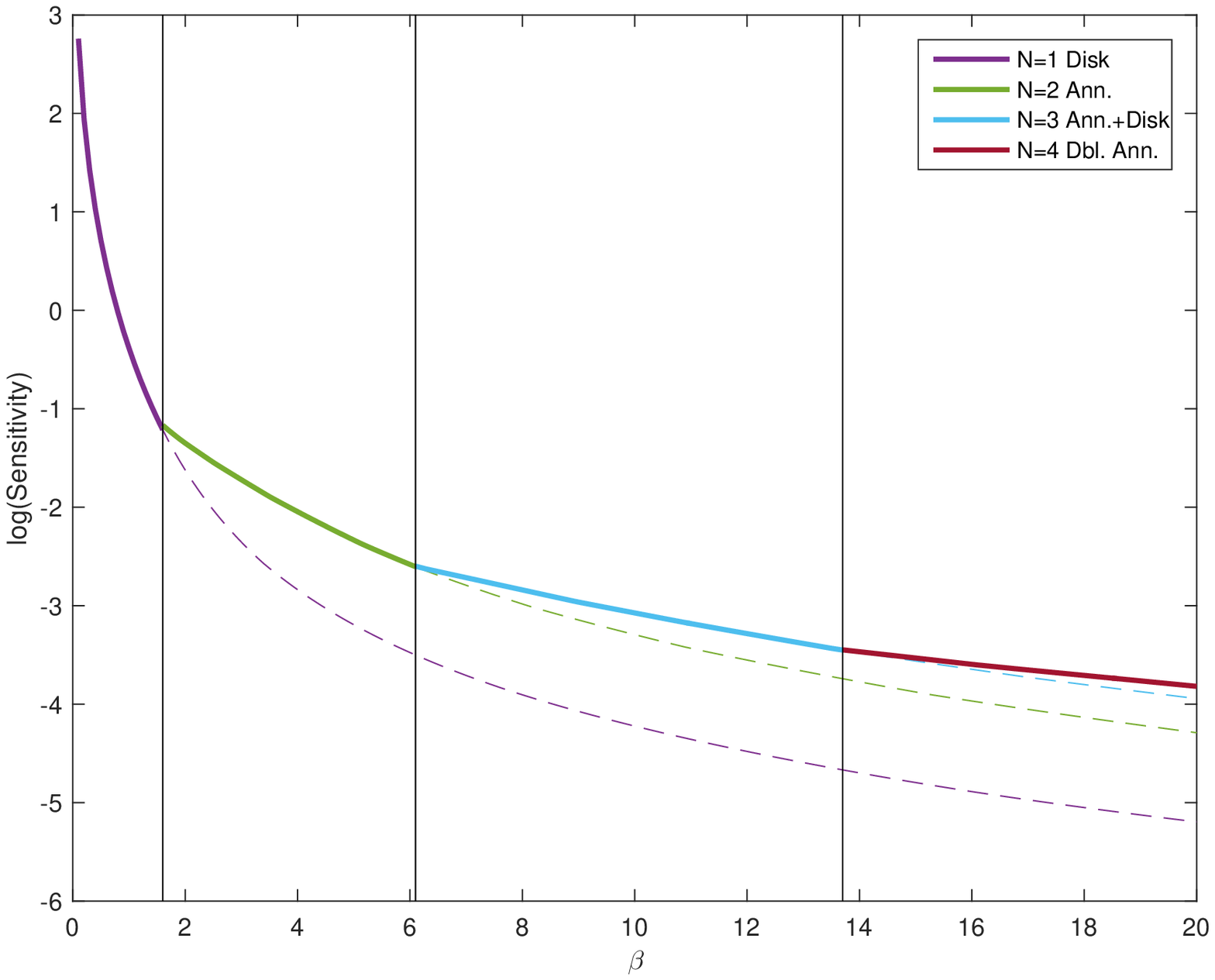}
\caption{Optimal configurations for different values of $\beta$. Tested were all configurations
with $N\leq 4$. Thick lines: values of the log of the optimal sensitivity. Dashed lines: values of the
log of the optimal sensitivity within each configurations (e.g. $N = 1, N=2. N= 3$).}\label{fig1}
\end{figure}

Let us explain the figure. The x-axis indicates  the values of $\beta$ while the $y$-axis corresponds
to the logarithm of the optimal value of the sensitivity for a fixed $\beta$.
The full lines in the figure display the log of the optimal value  over $\beta$  and the
associated  colors encode which of
the four possibilities ($N = 1,\ldots,4$) are the optimal configuration
(violet: $N=1$, green: $N = 2$, blue: $N = 3$, red: $N = 4$).

It can be observed that there are clear intervals for $\beta$  where
one specific configuration is always optimal. A value of $\beta$
where the shape of the optimal configuration changes (e.g., from
disk to annulus)  is indicated by a vertical line. Thus, we
observe  that from $\beta \in [0,1.8]$ the disk $(N = 1)$
is the optimal configuration, whereas for higher $\beta$ in the
interval $[1.8,6.1]$ the annulus is optimal. An even higher $\beta$
gives a annulus with a disk inside and for $\beta > 13.8$
we find the double annulus as optimal. Clearly a pattern can be
observed, namely that for increasing $\beta$ the number of
components of the optimal shape increases. We expect that for higher
$\beta$ not in the range of our calculations, more complicated
shapes (e.g. with $N = 5$) are optimal.  Although we tested only to
$N = 4$ the given pattern indicates that in the low range of $\beta,$
the shapes with few components are indeed optimal because in the
first interval $\beta \in [0,1.8]$ we never found  a better
structure with $N>1$.

In Figure~\ref{fig1} we also indicated the log-values of the optimal
sensitivity for each shape with fixed $N$ by a dotted line. For
instance, the dotted continuation of the violet line from $\beta =
1.8$ on corresponds to the optimal value within all disks
(i.e. $N = 1.$)  The continuation of the green line corresponds to
the optimal value within  all annuli and so on. It can be observed
that, for instance, in the range where the annulus is optimal, the
value of the sensitivity is significantly higher than the best
 disk. Roughly, the sensitivity of the best annulus is about
twice that of the best disk. This indicates that in
practice and for certain parameter setups, there can be a
significant gain of confidence when using an annulus instead of a
disk as initial bleach.

In the second figure, Figure~\ref{fig2}, we plotted the values of the radii of the optimal configuration
in each  interval of $\beta$. On the x-axis we put again the value of
$\beta$ and on the y-axis we have the scaled radius.
For instance in $\beta \in [0, 1.8]$ the disk is optimal
and only one radius is required for its parameterization. The value
of the optimal radius is given by the curve in the interval $[0,
1.8]$ In the next interval, the annulus is optimal which has an
outer and an inner radius as given by the two curves and so on, up
to the double annulus with 4 radii. Note that the jumps in this
graphs inside of the intervals are artefacts due to the discrete
grid for the radius; the exact curves should be smooth.

\begin{figure}[p]
\centering
\includegraphics[width=0.7\textwidth]{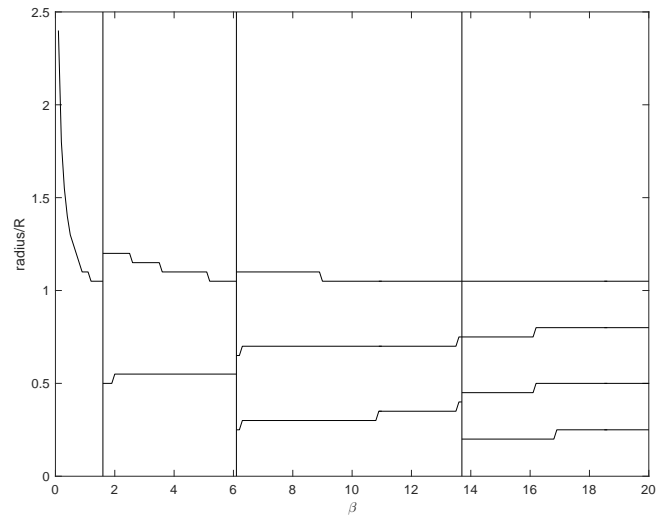}
\caption{Scaled radii of the optimal configurations as a function of $\beta$.} \label{fig2}
\end{figure}

\begin{figure}[p]
\centering
\includegraphics[width=0.7\textwidth]{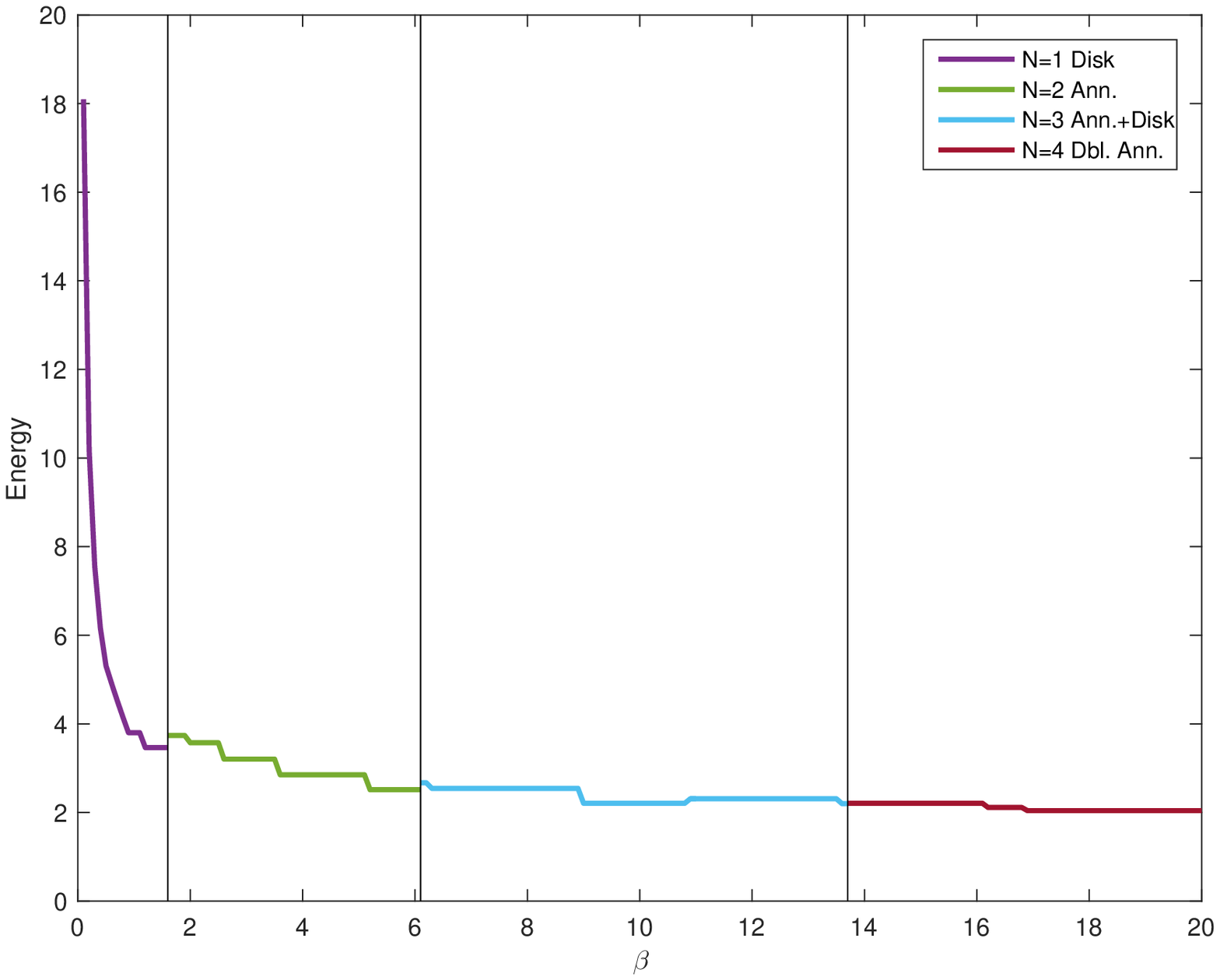}
\caption{Energy (i.e.,  $\int_{\R^2} u_0(x) dx$)  of the optimal configurations as a function
of $\beta$.} \label{fig2ex}
\end{figure}

In Figure~\ref{fig2ex}, we plotted the energy (i.e., the $L^1$-norm $\int_{\R^2} u_0(x) dx$) of the
optimal configurations as in Figure~\ref{fig2}. Of course, those can be easily calculated from the 
radii in Figure~\ref{fig2}.

For illustration purposes, in Figure~\ref{fig3}  we present some optimal initial shapes for some representative values of
$\beta$ in each of the intervals, for $\beta = 1,3,10,18$ (from left to right, from top to bottom).

\begin{figure}[p]
\centering
\begin{minipage}{0.49\textwidth}
\centering
\includegraphics[width=\textwidth]{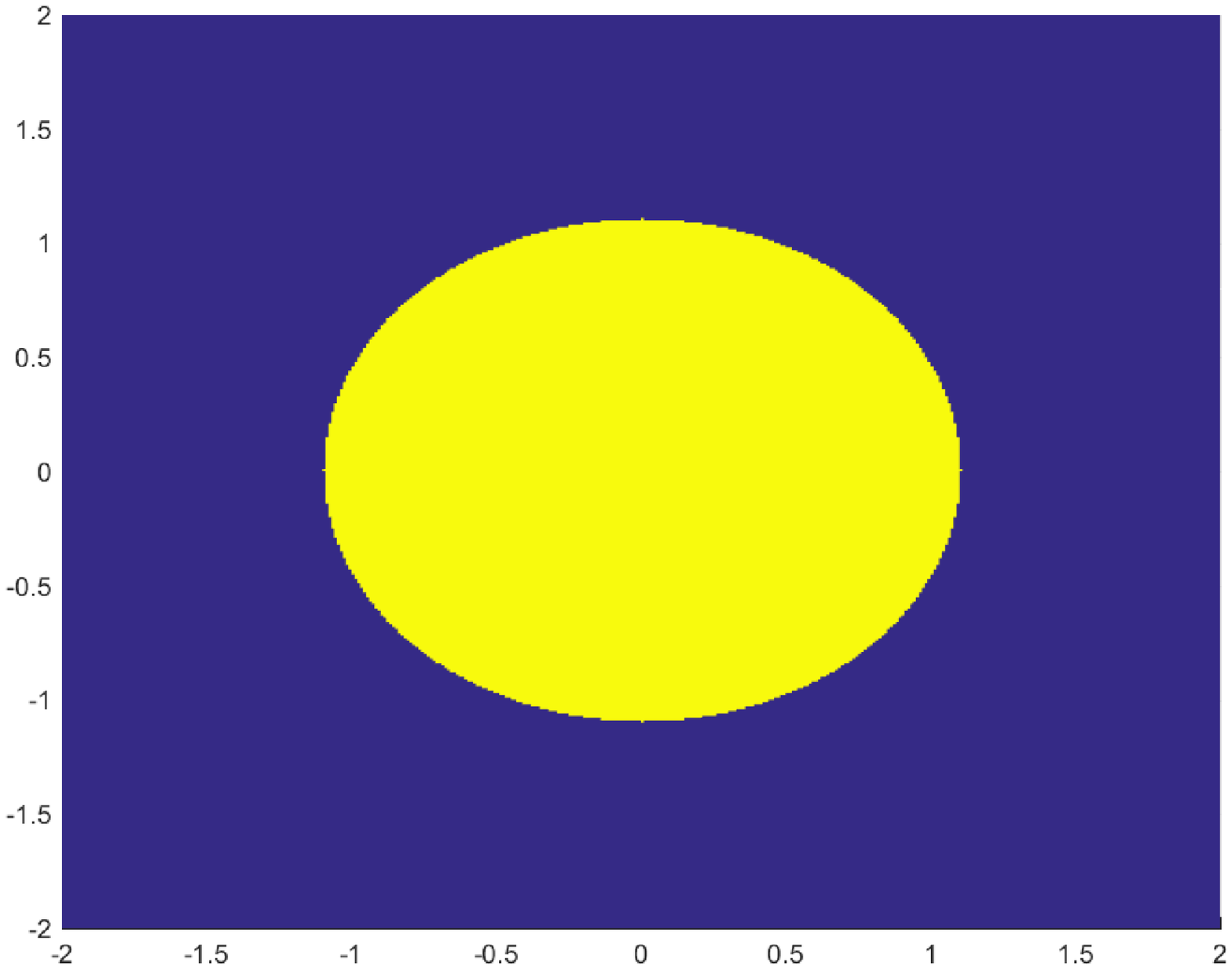}
{$\beta = 1$}

\includegraphics[width=\textwidth]{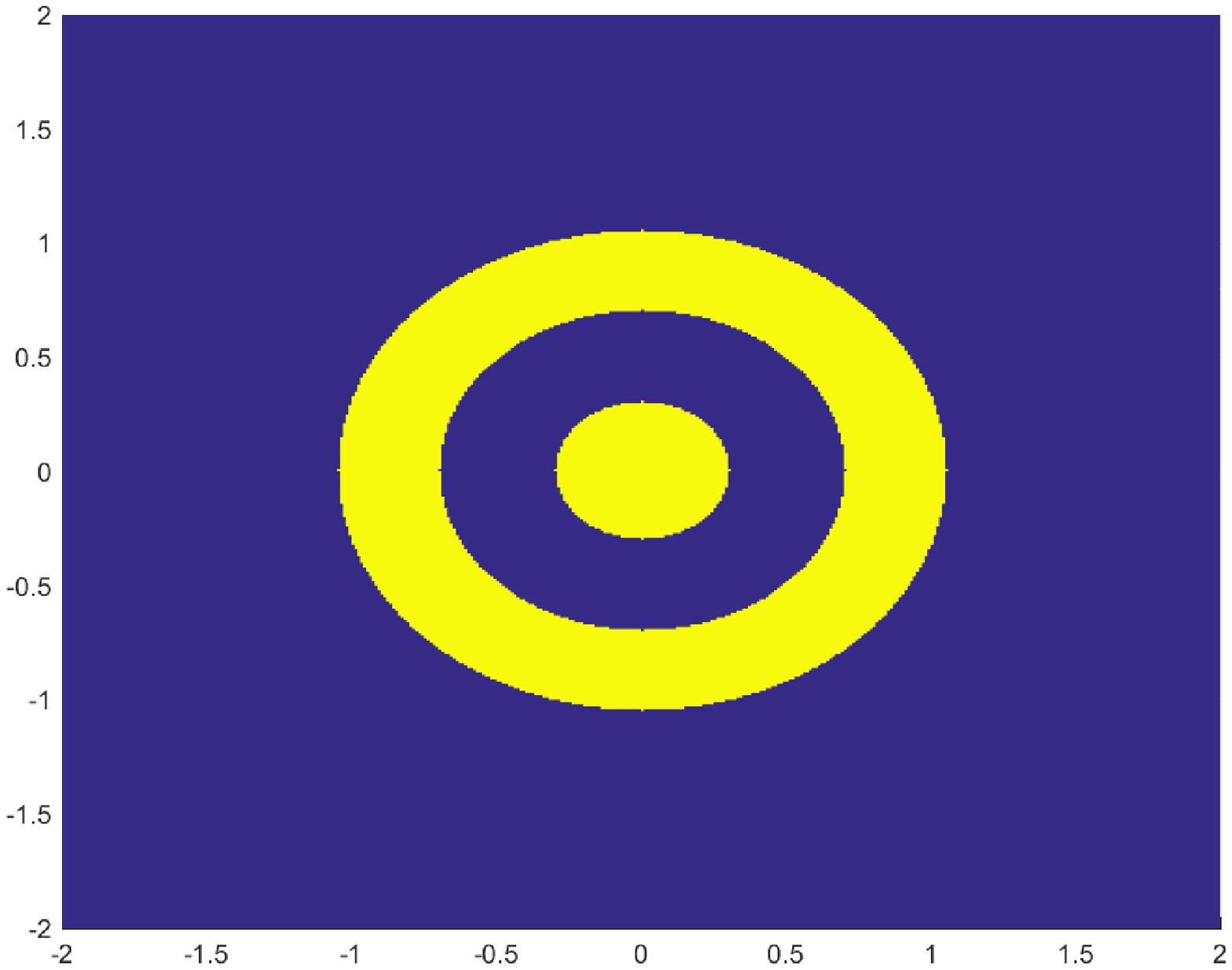}
{$\beta = 10$}
\end{minipage}
\begin{minipage}{0.49\textwidth}
\centering
\includegraphics[width=\textwidth]{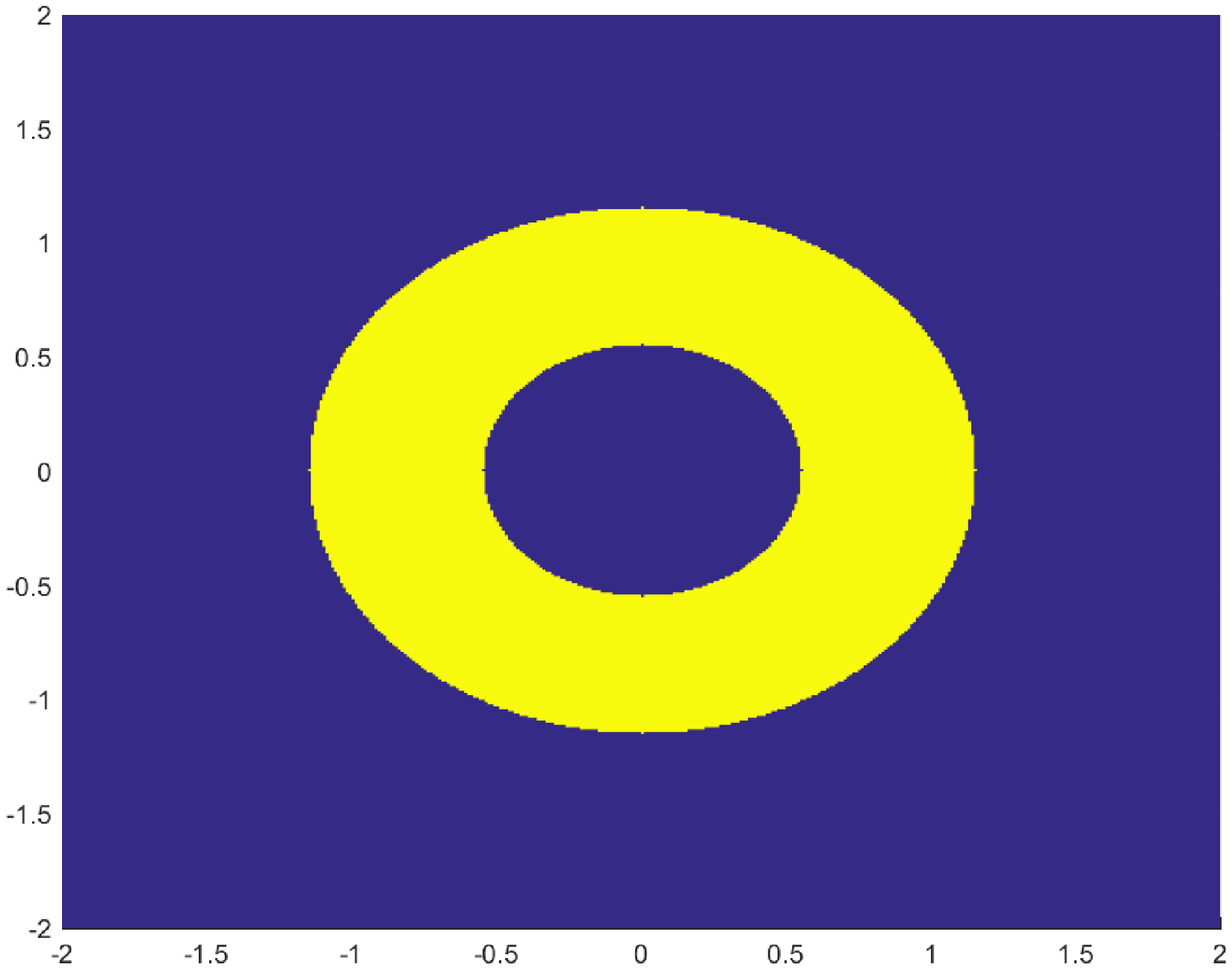}
{$\beta = 3$}

\includegraphics[width=\textwidth]{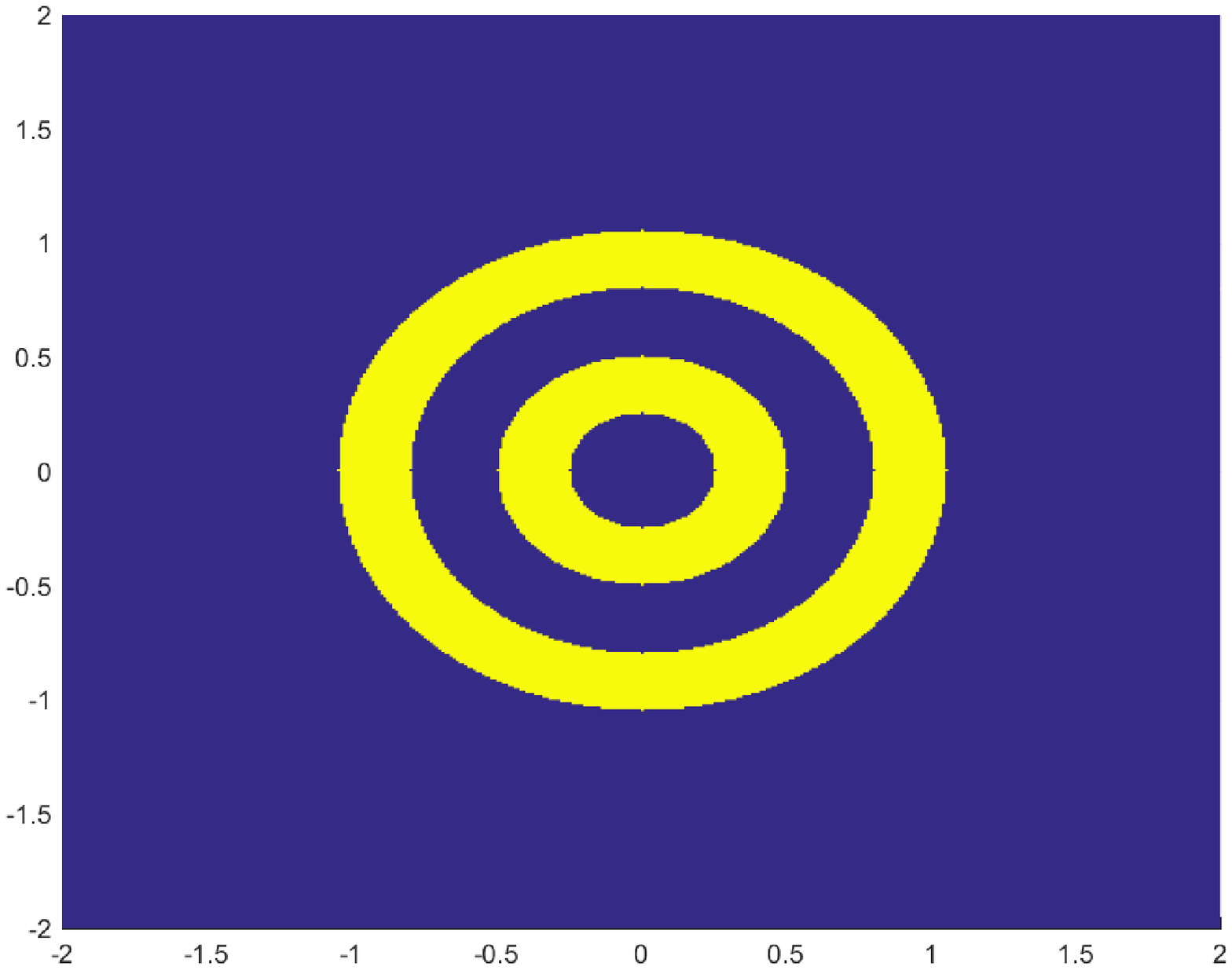}
{$\beta = 18$}
\end{minipage}
\caption{Optimal shapes of initial bleach for various values of $\beta$.}\label{fig3}
\end{figure}

\subsection{Problem 2}
The results from Problem 1 can also be used for  Problem 2.  Here we have the
additional restriction on the energy. The energy of some optimal configurations without constraints can 
be read off from Figure~\ref{fig2ex}. 

In order to find  an optimal shape, we again restrict
ourselves to $N \leq 4$. We calculate the value of the kernel for different configurations,
i.e., values of $N$, and $r_i$ and additional calculate their energy (the area of the initial bleach shape in this case).
%Then we plotted the kernel against the energy for each $N$.
For each of the four cases of $N$ we pick the configuration which is
optimal for each energy.  Note that this comes with a slight
difficulty because due to the discretization of the radii, the
energy values are discrete and in order to compare configurations,
we have to interpolate these values to a common energy grid. Note
that for the case $N = 1,$ when fixing the energy, the disk
is uniquely determined by the energy and there is no need to
optimize in this case because no degree of freedom is left.

\begin{figure}[p]\label{fig4}
\centering
\includegraphics[width=0.7\textwidth]{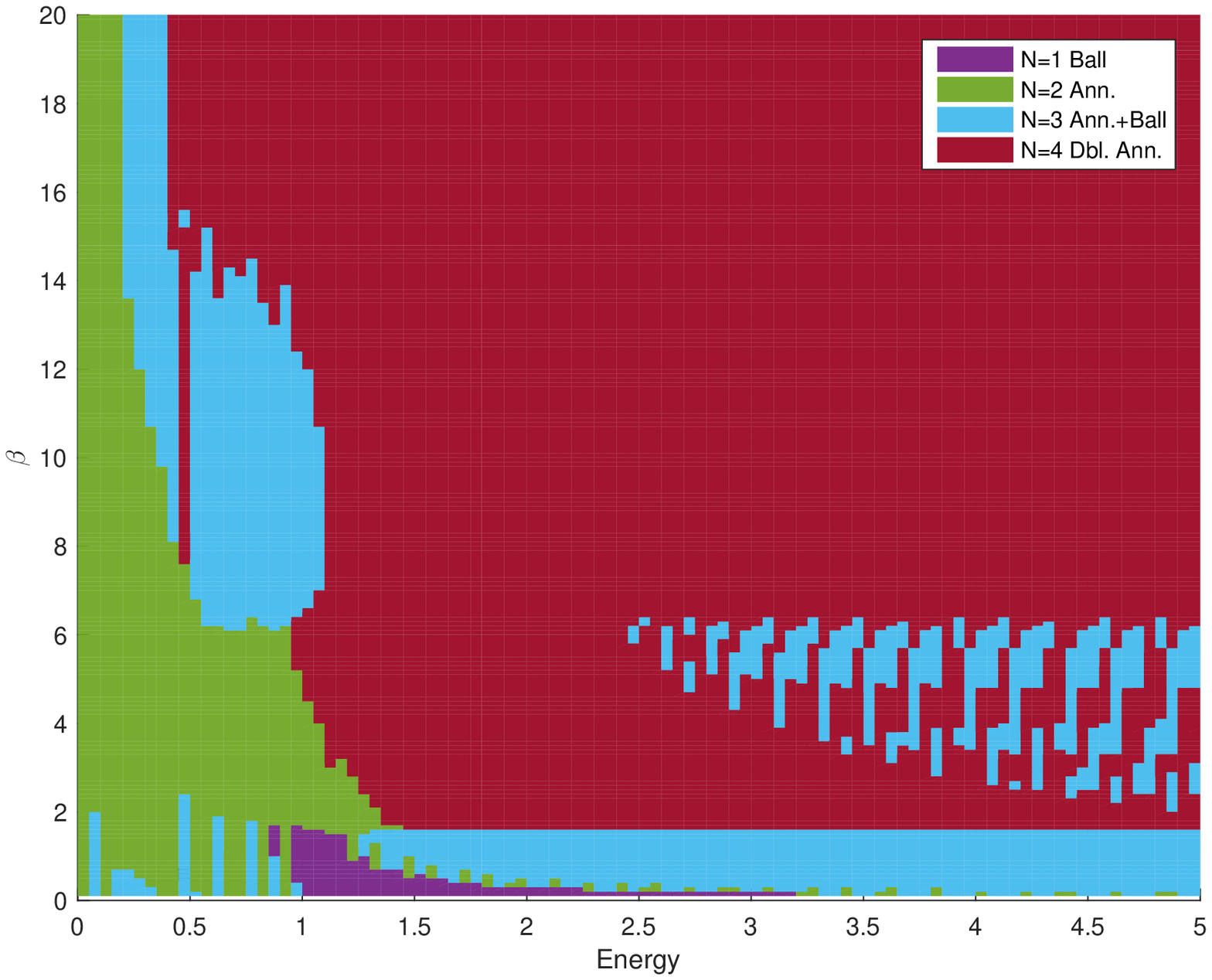}
\caption{Color-coded illustration of optimal configuration with $N \leq 4$ for different values of $\beta$ and
with fixed values of the energy (Problem 2). The colors indicates which configuration was found optimal.}
\end{figure}

In Figure~\ref{fig4}, we
display again a color-coded plot indicating the optimal configurations (up to $N\leq 4$).
For each value of the energy and each value of $\beta,$ the color at
the corresponding pixel indicates which of the configurations are
found optimal. We observe that in the majority of cases the double
annulus is optimal, but this is only because we restricted ourselves
to $N \leq 4$. It is to be expected that initial shapes with more
annuli are optimal in general and the problem does not reveal an
obvious pattern. Furthermore, we observe that the plot has  quite
irregular regions with not always clearly defined boundaries. This
can be an artefact of the discretization and we consider the plot
only as rough indication of what is going on.

Only in a tiny region around $\beta \sim 0$ energy $\sim 2,$ we have
a situation where the disk is optimal, but it is uncertain
if this is a valid result or an artefact from our discretization and
the interpolation of the energy values.  As a summary, it is quite
likely that simple-shaped objects (i.e., those with small  $N$) are
rarely (or even never) optimal for Problem~2.

Although, we have a put some model assumptions like symmetric 
observation domain and simple diffusion with constant parameter, 
we think that our results are relevant and---with modification---
valid also in more general cases. In particular, when the observational 
domain is not a disk but an square, we expect the optimal bleach shape for Problem~1
being a rounded square or several nested rounded squares, where the connectivity
again increases with $\beta$. For non-constant diffusion, the sensitivity 
is represented by a matrix and the optimal experimental design 
leads to the problem of maximizing some matrix norm of the inverse of the 
Fisher information matrix. Then, the optimal bleach shape for Problem~1 might depend on 
several additional design setups like the basis representation of the 
diffusion coefficient. Still it is not unlikely, that even in this case 
a similar pattern of optimal shapes with increasing connectivity 
emerges.

\section{Conclusion}
Our study started with the question: How does the bleach shape (and
topology) influence the accuracy of resulting parameter estimates?
Then the problem of the optimal initial shape for the identification
of a constant diffusion parameter was formulated.
 As
optimality criterion we choose to maximize a sensitivity measure in
order to have the expected error minimal; cf. \eqref{sensi}.
We studied
two problems, the first
%without additional restrictions
 with the fixed bleach depth
 and the
second with an fixed energy as an additional restriction.
We found out the analytical expressions for the sensitivity measure
$S_{int}$, cf. Proposition 3.2, allowing to compare different initial
bleach shapes.
Our numerical calculations revealed rather surprising results.
For  the first problem without restriction on energy, a clear
pattern is revealed. For small values of the scaled inverse
diffusion coefficient, the disk is the optimal shape and
for higher values, shapes with more and more components (i.e.
annuli-type shapes) become optimal. In particular, the disk
is not always the best shape. For practically relevant values of the
parameters,  sometimes an annulus can be better leading  to a
significant improvement in the confidence intervals.
For the later problem with restriction on energy, is seems to have
in most of the cases or even always only  highly
oscillating solutions.  
However, our ongoing research is directed to
this problem.

\subsection*{Acknowledgement}
  This work was supported  by
the Ministry of Education, Youth and Sports of the Czech Republic --
projects CENAKVA (No. CZ.1.05/2.1.00/01.0024), the Cenakva Centre
Development CZ.1.05/2.1.00/19.0380, and
 by the OeAD (Austrian agency for
international mobility and cooperation in education, science and
research) within the programme "Aktion Oesterreich-Tschechien
(AOeCZ-Universitaetslehrerstipendien)".
%
% ---- Bibliography ----
%

\end{document}